\newtheorem{theorem}{Theorem}
\newtheorem{lemma}{Lemma}[section]
\newtheorem{remark}{Remark}[section]
\newtheorem{corollary}{Corollary}[section]
\newtheorem{example}{Example}[section]
\numberwithin{equation}{section}
\begin{document}
\title{Sharpening Some classical numerical radius inequalities}
\author{Hamid Reza Moradi$^1$, Mohsen Erfanian Omidvar$^2$ and Khalid Shebrawi$^3$}
\subjclass[2010]{Primary 47A12. Secondary 47A30.}
\keywords{Numerical radius; operator norm; hyponormal operator; AM-GM inequality.}

\maketitle
\begin{abstract}
New upper and lower bounds for the numerical radii of Hilbert space
operators are given. Among our results, we prove that if  $A\in \mathcal{B}%
\left( \mathcal{H}\right) $ is a hyponormal operator, then for all non-negative non-decreasing operator convex $f$ on $ [0,\infty ),$ we have 
\[f\left( \omega \left( A \right) \right)\le \frac{1}{2}\left\| f\left( \frac{1}{1+\frac{\xi _{\left| A \right|}^{2}}{8}}\left| A \right| \right)+f\left( \frac{1}{1+\frac{\xi _{\left| A \right|}^{2}}{8}}\left| {{A}^{*}} \right| \right) \right\|,\]
where ${{\xi }_{\left\vert A\right\vert }}=\underset{\left\Vert x\right\Vert
=1}{\mathop{\inf }}\,\left\{ \frac{\left\langle \left( \left\vert
A\right\vert -\left\vert {{A}^{\ast }}\right\vert \right) x,x\right\rangle }{%
\left\langle \left( \left\vert A\right\vert +\left\vert {{A}^{\ast }}%
\right\vert \right) x,x\right\rangle }\right\} $. Our results refine and
generalize earlier inequalities for hyponormal operator.
\end{abstract}

\pagestyle{myheadings} 
\markboth{\centerline {Sharpening some classical numerical radius inequalities}}
{\centerline {H.R. Moradi, M.E. Omidvar \& K. Shebrawi}} \bigskip \bigskip 

\section{\bf Introduction}
\vskip0.4 true cm
Let $\left( \mathcal{H},\left\langle \cdot ,\cdot
\right\rangle \right) $ be a complex Hilbert space and $\mathcal{B}\left( 
\mathcal{H}\right) $ denote the ${{C}^{\ast }}$-algebra of all bounded
linear operators on $\mathcal{H}$. For $A\in \mathcal{B}\left( \mathcal{H} \right)$, we denote by $\left| A \right|$ the absolute value operator of $A$, that is, $\left| A \right|={{\left( {{A}^{*}}A \right)}^{\frac{1}{2}}}$, where ${{A}^{*}}$ is the adjoint operator of $A$.
A continuous real-valued function $f$ defined on an interval $I$ is said to be operator convex if $f\left( \lambda A+\left( 1-\lambda  \right)B \right)\le \lambda f\left( A \right)+\left( 1-\lambda  \right)f\left( B \right)$ for all self-adjoint operators $A,B$ with spectra contained in $I$ and all $\lambda \in \left[ 0,1 \right]$.

The numerical range of an operator $A$ in $\mathcal{B}\left( \mathcal{H} \right)$ is defined as $W\left( A \right)=\left\{ \left\langle Ax,x \right\rangle :\text{ }\left\| x \right\|=1 \right\}$. For any $A\in \mathcal{B}\left( \mathcal{H} \right)$, $\overline{W\left( A \right)}$ is a convex subset of the complex plane containing the spectrum of $A$ (see {{\cite[Chapter 2]{book2}}}). 

Recall that $\omega \left( A\right) =%
\underset{\left\Vert x\right\Vert =1}{\mathop{\sup }}\,\left\vert
\left\langle Ax,x\right\rangle \right\vert $ and $\left\Vert A\right\Vert =%
\underset{\left\Vert x\right\Vert =1}{\mathop{\sup }}\,\left\Vert
Ax\right\Vert $. It is well-known that $\omega \left( \cdot \right) $
defines a norm on $\mathcal{B}\left( \mathcal{H}\right) $, which is
equivalent to the usual operator norm $\left\Vert \cdot \right\Vert $.
Namely, for $A\in \mathcal{B}\left( \mathcal{H}\right) $, we have 
\begin{equation}
\frac{1}{2}\left\Vert A\right\Vert \leq \omega \left( A\right) \leq
\left\Vert A\right\Vert .  \label{7}
\end{equation}%
Other facts about the numerical radius that we use can be found in \cite{5}.

The inequalities in \eqref{7} have been improved considerably by many
authors, (see, e.g., \cite{1, HKS1, HKS2,SA,S,Y}), Kittaneh \cite{3,2} has
shown the following precise estimates of $\omega \left( A\right) $ by using
several norm inequalities and ingenious techniques: 
\begin{equation}
\omega \left( A\right) \leq \frac{1}{2}\left( \left\Vert A\right\Vert +{{%
\left\Vert {{A}^{2}}\right\Vert }^{\frac{1}{2}}}\right) ,  \label{5}
\end{equation}%
and 
\begin{equation}
\frac{1}{4}\left\Vert {{\left\vert A\right\vert }^{2}}+{{\left\vert {{A}%
^{\ast }}\right\vert }^{2}}\right\Vert \leq {{\omega }^{2}}\left( A\right)
\leq \frac{1}{2}\left\Vert {{\left\vert A\right\vert }^{2}}+{{\left\vert {{A}%
^{\ast }}\right\vert }^{2}}\right\Vert .  \label{3}
\end{equation}%
In \cite{4}, Dragomir gave the following estimate of the numerical radius
which refines the second inequality in \eqref{7}: For every $A$, 
\begin{equation*}
{{\omega }^{2}}\left( A\right) \leq \frac{1}{2}\left( \omega \left( {{A}^{2}}%
\right) +{{\left\Vert A\right\Vert }^{2}}\right) .
\end{equation*}%
In this paper, we establish a considerable improvement of the second
inequality in \eqref{3}. We also propose a new upper bound for $\omega
\left( \cdot \right) $ for the hyponormal operators. Next, we will give a
refinement of the first inequality in \eqref{7}.

\section{\bf Upper bounds for the numerical radii}

\vskip 0.4 true cm The following lemma is known as the mixed Schwarz
inequality (see {{\cite[pp. 75-76]{12}}}).

\begin{lemma}
\label{13} If $A\in \mathcal{B}\left( \mathcal{H}\right) $, then 
\begin{equation*}
\left\vert \left\langle Ax,y\right\rangle \right\vert \leq {{\left\langle
\left\vert A\right\vert x,x\right\rangle }^{\frac{1}{2}}}{{\left\langle
\left\vert A^{\ast }\right\vert y,y\right\rangle }^{\frac{1}{2}}},
\end{equation*}%
for all $x,y\in \mathcal{H}$.
\end{lemma}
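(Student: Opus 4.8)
The plan is to base the proof on the polar decomposition $A = U\left| A \right|$, where $U$ is a partial isometry with initial space $\overline{\operatorname{ran}\left| A \right|}$, and then to reduce the claim to the ordinary Cauchy--Schwarz inequality by splitting $\left| A \right|$ as $\left| A \right|^{1/2}\left| A \right|^{1/2}$. This is the most direct route, since it converts the mixed expression on the right-hand side into two honest norms of vectors.

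First I would write, using that $\left| A \right|^{1/2}$ is self-adjoint,
\[
\left\langle Ax,y\right\rangle = \left\langle U\left| A \right|x,y\right\rangle = \left\langle \left| A \right|x,U^{*}y\right\rangle = \left\langle \left| A \right|^{1/2}x,\left| A \right|^{1/2}U^{*}y\right\rangle,
\]
and then apply Cauchy--Schwarz to the last inner product to obtain
\[
\left\vert \left\langle Ax,y\right\rangle \right\vert \le \left\Vert \left| A \right|^{1/2}x\right\Vert \,\left\Vert \left| A \right|^{1/2}U^{*}y\right\Vert.
\]
Evaluating the two factors is where the two pieces of the desired bound emerge: the first is immediate, $\left\Vert \left| A \right|^{1/2}x\right\Vert^{2} = \left\langle \left| A \right|x,x\right\rangle$, while for the second I would compute $\left\Vert \left| A \right|^{1/2}U^{*}y\right\Vert^{2} = \left\langle \left| A \right|U^{*}y,U^{*}y\right\rangle = \left\langle U\left| A \right|U^{*}y,y\right\rangle$, so the entire argument reduces to verifying the single operator identity $U\left| A \right|U^{*}=\left| A^{*} \right|$.

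The main obstacle is precisely this identity, and the delicate point is that $U$ is only a partial isometry, so $U^{*}U = I$ fails in general. To handle it I would start from $AA^{*}=U\left| A \right|^{2}U^{*}$ and show that $U\left| A \right|U^{*}$ is a positive operator whose square equals $AA^{*}$. The square computation uses $U^{*}U = P$, the orthogonal projection onto $\overline{\operatorname{ran}\left| A \right|}$, together with the relations $P\left| A \right| = \left| A \right|P = \left| A \right|$ (both of which hold because $\left| A \right|$ maps into, and is supported on, $\overline{\operatorname{ran}\left| A \right|}$). Positivity is clear from $\left\langle U\left| A \right|U^{*}y,y\right\rangle = \left\langle \left| A \right|U^{*}y,U^{*}y\right\rangle \ge 0$. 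Invoking the uniqueness of the positive square root then gives $U\left| A \right|U^{*} = (AA^{*})^{1/2} = \left| A^{*} \right|$.

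Substituting this identity back yields $\left\Vert \left| A \right|^{1/2}U^{*}y\right\Vert^{2} = \left\langle \left| A^{*} \right|y,y\right\rangle$, and combining with the first factor produces
\[
\left\vert \left\langle Ax,y\right\rangle \right\vert \le \left\langle \left| A \right|x,x\right\rangle^{1/2}\left\langle \left| A^{*} \right|y,y\right\rangle^{1/2},
\]
which is the assertion. I expect no difficulty beyond the bookkeeping for the partial isometry; everything else is a one-line application of Cauchy--Schwarz.
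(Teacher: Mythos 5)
Your proof is correct. Note that the paper does not prove this lemma at all: it quotes it as the known ``mixed Schwarz inequality'' and cites Halmos's \emph{A Hilbert Space Problem Book} (pp.~75--76). The argument you give --- polar decomposition $A=U\left\vert A\right\vert$, splitting $\left\vert A\right\vert=\left\vert A\right\vert^{1/2}\left\vert A\right\vert^{1/2}$, applying Cauchy--Schwarz, and reducing everything to the identity $U\left\vert A\right\vert U^{\ast}=\left\vert A^{\ast}\right\vert$ --- is essentially the classical proof found in that reference, and you handle the one genuinely delicate point correctly: since $U$ is only a partial isometry, you use $U^{\ast}U=P$ (the projection onto $\overline{\operatorname{ran}\left\vert A\right\vert}$) together with $P\left\vert A\right\vert=\left\vert A\right\vert P=\left\vert A\right\vert$ to compute $\left( U\left\vert A\right\vert U^{\ast}\right)^{2}=U\left\vert A\right\vert^{2}U^{\ast}=AA^{\ast}$, and then invoke positivity and uniqueness of the positive square root. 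Nothing is missing; your write-up supplies the proof the paper leaves to the citation.
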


The second lemma is a norm inequality for the sum of two positive operators,
which can be found in \cite{NORM}.

\begin{lemma}
\label{l2}If $A$ and $B$ are positive operators in $\mathcal{B}\left( \mathcal{H}%
\right) $, then%
\begin{equation*}
\left\Vert A+B\right\Vert \leq \max \left( \left\Vert A\right\Vert
,\left\Vert B\right\Vert \right) +\left\Vert A^{\frac{1}{2}}B^{\frac{1}{2}%
}\right\Vert .
\end{equation*}
\end{lemma}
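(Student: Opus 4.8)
The plan is to realize the positive operator $A+B$ as a corner of $TT^{\ast}$ for a suitably chosen $2\times 2$ operator matrix $T$ on $\mathcal{H}\oplus\mathcal{H}$, and then to exploit the $C^{\ast}$-identity $\|TT^{\ast}\|=\|T^{\ast}T\|$ to transfer the problem to a self-adjoint block matrix whose norm splits easily. Concretely, I would set
\[
T=\begin{pmatrix} A^{1/2} & B^{1/2} \\ 0 & 0 \end{pmatrix}.
\]
A direct multiplication then gives
\[
TT^{\ast}=\begin{pmatrix} A+B & 0 \\ 0 & 0 \end{pmatrix},
\qquad
T^{\ast}T=\begin{pmatrix} A & A^{1/2}B^{1/2} \\ B^{1/2}A^{1/2} & B \end{pmatrix},
\]
so that $\|A+B\|=\|TT^{\ast}\|=\|T^{\ast}T\|$, the last equality being the $C^{\ast}$-identity $\|TT^{\ast}\|=\|T\|^{2}=\|T^{\ast}T\|$.

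Next I would bound $\|T^{\ast}T\|$ by peeling off its diagonal. Writing
\[
\begin{pmatrix} A & A^{1/2}B^{1/2} \\ B^{1/2}A^{1/2} & B \end{pmatrix}
=\begin{pmatrix} A & 0 \\ 0 & B \end{pmatrix}
+\begin{pmatrix} 0 & A^{1/2}B^{1/2} \\ B^{1/2}A^{1/2} & 0 \end{pmatrix}
\]
and applying the triangle inequality for the operator norm, the block-diagonal summand contributes $\max(\|A\|,\|B\|)$, since the norm of a direct sum of operators is the maximum of the norms of the summands.

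It then remains to identify the norm of the off-diagonal part $M=\begin{pmatrix} 0 & C \\ C^{\ast} & 0 \end{pmatrix}$ with $C=A^{1/2}B^{1/2}$. For this I would again invoke the $C^{\ast}$-identity: from $M^{\ast}M=\begin{pmatrix} CC^{\ast} & 0 \\ 0 & C^{\ast}C \end{pmatrix}$ one gets $\|M\|^{2}=\|M^{\ast}M\|=\max(\|CC^{\ast}\|,\|C^{\ast}C\|)=\|C\|^{2}$, hence $\|M\|=\|A^{1/2}B^{1/2}\|$. Combining the three displays yields exactly
\[
\|A+B\|\le \max(\|A\|,\|B\|)+\|A^{1/2}B^{1/2}\|.
\]
The argument is routine once $T$ is in hand, so the only genuinely clever point — and thus the main obstacle — is spotting that this particular $T$ simultaneously places $A+B$ in a corner of $TT^{\ast}$ while producing the cross term $A^{1/2}B^{1/2}$ off the diagonal of $T^{\ast}T$; everything after that reduces to the two standard block-matrix norm identities used above.
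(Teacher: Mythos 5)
Your proof is correct. The paper itself gives no proof of this lemma---it is quoted from Kittaneh \cite{NORM}---and your block-matrix argument is essentially the argument of that source: there too one realizes $A+B$ as $TT^{\ast}$ (equivalently $X^{\ast}X$ for the column operator $X$ with entries $A^{1/2},B^{1/2}$) and the operator matrix with diagonal entries $A,B$ and off-diagonal entries $A^{1/2}B^{1/2}$, $B^{1/2}A^{1/2}$ as $T^{\ast}T$, identifies their norms (Kittaneh phrases this via spectral radii of $XY$ and $YX$ rather than the $C^{\ast}$-identity, which is an equivalent device for self-adjoint products), and then splits off the diagonal exactly as you do.
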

 The following lemma contains a simple inequality, which will be needed in the sequel.
\begin{lemma}
For each $\alpha \ge 1$, we have
\begin{equation}  \label{41}
\frac{\alpha -1}{\alpha +1}\le \ln \alpha .
\end{equation}
\end{lemma}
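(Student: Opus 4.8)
The plan is to reduce the inequality to a one–variable monotonicity statement and settle it by differentiation. Define
\[
g(\alpha)=\ln \alpha-\frac{\alpha-1}{\alpha+1}\qquad(\alpha\ge 1),
\]
so that the assertion \eqref{41} is precisely $g(\alpha)\ge 0$. First I would record the boundary value $g(1)=\ln 1-\frac{0}{2}=0$; it then suffices to prove that $g$ is nondecreasing on $[1,\infty)$.

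Next I would compute $g'$. Differentiating the rational term by the quotient rule gives $\frac{d}{d\alpha}\frac{\alpha-1}{\alpha+1}=\frac{2}{(\alpha+1)^2}$, and hence
\[
g'(\alpha)=\frac{1}{\alpha}-\frac{2}{(\alpha+1)^{2}}
=\frac{(\alpha+1)^{2}-2\alpha}{\alpha(\alpha+1)^{2}}
=\frac{\alpha^{2}+1}{\alpha(\alpha+1)^{2}}.
\]
For $\alpha\ge 1$ both the numerator $\alpha^{2}+1$ and the denominator $\alpha(\alpha+1)^{2}$ are strictly positive, so $g'(\alpha)>0$. Combining this with $g(1)=0$, the fundamental theorem of calculus gives $g(\alpha)=\int_{1}^{\alpha}g'(t)\,dt\ge 0$ for every $\alpha\ge 1$, which is the claimed inequality.

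There is essentially no hard step here: once the derivative collapses to $\frac{\alpha^{2}+1}{\alpha(\alpha+1)^{2}}$ its positivity is transparent, so the only point deserving a little care is the algebraic simplification of $g'$. As an alternative route that avoids calculus altogether, one may substitute $\alpha=e^{t}$ with $t\ge 0$; then $\frac{\alpha-1}{\alpha+1}=\tanh\!\left(\frac{t}{2}\right)$, and the claim becomes $\tanh\!\left(\frac{t}{2}\right)\le t$, which follows at once from the elementary bound $\tanh s\le s$ valid for all $s\ge 0$.
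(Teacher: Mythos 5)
Your proof is correct and follows essentially the same route as the paper: both define the difference function $\ln\alpha-\frac{\alpha-1}{\alpha+1}$, verify it vanishes at $\alpha=1$, and show it is nondecreasing by checking the sign of the derivative (you merely carry out explicitly the ``elementary computation'' $g'(\alpha)=\frac{\alpha^{2}+1}{\alpha(\alpha+1)^{2}}>0$ that the paper leaves to the reader). The additional $\tanh$ substitution is a nice aside, but the core argument coincides with the paper's.
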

\begin{proof}
Taking $f\left( \alpha  \right)\equiv \ln \alpha -\frac{\alpha -1}{\alpha +1}$, where $\alpha \ge 1$. By an elementary computation we have $f'\left( \alpha  \right)\ge 0$, so $f\left( \alpha  \right)$ is an increasing function for $\alpha \ge 1$. On the other hand $f\left( \alpha  \right)\ge f\left( 1 \right)=0$.
\end{proof}

\vskip 0.3 true cm
 Now, we are ready to present our new improvement of the
second inequality in \eqref{3}. Recall that, an operator $A$ defined on a
Hilbert space $\mathcal{H}$ is said to be hyponormal if ${{A}^{*}}A-A{{A}^{*}%
}\ge 0$, or equivalently if $\left\| {{A}^{*}}x \right\|\le \left\| Ax
\right\|$ for every $x\in \mathcal{H}$.

\begin{theorem}
\label{44} Let $A\in \mathcal{B}\left( \mathcal{H}\right) $ be a hyponormal
operator. Then, for all non-negative non-decreasing operator convex $f$ on $[0,\infty ),$ we have 
\begin{equation}
f\left( \omega \left( A\right) \right) \leq \frac{1}{2}\left\Vert f\left( 
\frac{1}{1+\frac{\xi _{\left\vert A\right\vert }^{2}}{8}}\left\vert
A\right\vert \right) +f\left( \frac{1}{1+\frac{\xi _{\left\vert A\right\vert
}^{2}}{8}}\left\vert A^{\ast }\right\vert \right) \right\Vert ,  \label{43}
\end{equation}%
where ${{\xi }_{\left\vert A\right\vert }}=\underset{\left\Vert x\right\Vert
=1}{\mathop{\inf }}\,\left\{ \frac{\left\langle \left( \left\vert
A\right\vert -\left\vert {{A}^{\ast }}\right\vert \right) x,x\right\rangle }{%
\left\langle \left( \left\vert A\right\vert +\left\vert {{A}^{\ast }}%
\right\vert \right) x,x\right\rangle }\right\} $.
\end{theorem}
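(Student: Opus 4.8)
The strategy is to combine the mixed Schwarz inequality (Lemma~\ref{13}) with the two ingredients assembled just before the statement, namely the norm bound for sums of positive operators (Lemma~\ref{l2}) and the elementary logarithmic inequality~\eqref{41}, which together produce the improving constant $1/(1+\xi_{|A|}^2/8)$. First I would start from $\omega(A)=\sup_{\|x\|=1}|\langle Ax,x\rangle|$ and apply Lemma~\ref{13} with $y=x$ to get $|\langle Ax,x\rangle|\le \langle |A|x,x\rangle^{1/2}\langle |A^*|x,x\rangle^{1/2}$. The right-hand side is a geometric mean of the two quantities $a=\langle |A|x,x\rangle$ and $b=\langle |A^*|x,x\rangle$, so the plan is to bound this geometric mean by a weighted arithmetic mean \emph{with a deficit term}. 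This is where the parameter $\xi_{|A|}$ enters: by a refined AM--GM inequality of the form $\sqrt{ab}\le \tfrac12(a+b)-r(\sqrt a-\sqrt b)^2$ (the deficit being controlled using~\eqref{41} applied to $\alpha=a/b$ or similar), one extracts a contraction factor that is uniform in $x$ once one passes to the infimum defining $\xi_{|A|}$. Hyponormality, i.e. $\|A^*x\|\le\|Ax\|$, guarantees $\langle|A|^2x,x\rangle\ge\langle|A^*|^2x,x\rangle$ and keeps the relevant ratios $\ge 1$ so that~\eqref{41} applies in the right direction.

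Once the pointwise estimate $|\langle Ax,x\rangle|\le \tfrac{1}{1+\xi_{|A|}^2/8}\,\tfrac12\big(\langle|A|x,x\rangle+\langle|A^*|x,x\rangle\big)$ (or its square-root/geometric-mean analogue) is in hand, I would take the supremum over unit vectors and rewrite the right-hand side as $\tfrac12\big\langle\big(\frac{1}{1+\xi_{|A|}^2/8}(|A|+|A^*|)\big)x,x\big\rangle$, so that $\omega(A)\le \tfrac12\big\|\frac{1}{1+\xi_{|A|}^2/8}(|A|+|A^*|)\big\|$. The factor $1/(1+\xi_{|A|}^2/8)$ is a scalar and commutes through, so it simply scales both $|A|$ and $|A^*|$. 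At this stage the bound is on $\omega(A)$ itself rather than on $f(\omega(A))$.

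The final step is to upgrade the scalar bound to the operator-convex functional inequality. Here I would apply $f$, which is non-negative, non-decreasing and operator convex on $[0,\infty)$: monotonicity lets me apply $f$ to both sides of the scalar inequality, and operator convexity (together with $f(0)\le 0$ being unnecessary since we use the averaged form) gives $f\big(\tfrac12(S+T)\big)\le \tfrac12\big(f(S)+f(T)\big)$ in the operator order for the commuting scalar-multiple operators $S=\frac{1}{1+\xi_{|A|}^2/8}|A|$ and $T=\frac{1}{1+\xi_{|A|}^2/8}|A^*|$. Taking norms and using that $f$ applied to a positive operator is positive, so the norm of the average is bounded by the average of the norms in the convex sense, yields exactly the right-hand side of~\eqref{43}. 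The main obstacle I anticipate is the first step: pinning down the precise refined AM--GM inequality that yields the specific constant $\xi_{|A|}^2/8$, and verifying that passing from the pointwise $\xi$ (which depends on $x$) to the global infimum $\xi_{|A|}$ preserves the inequality uniformly. Getting the exact power and the factor $1/8$ to come out — rather than some other constant — is the delicate calibration that~\eqref{41} is evidently designed to supply.
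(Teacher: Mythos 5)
Your overall architecture matches the paper's proof exactly: mixed Schwarz with $y=x$, a refined AM--GM estimate producing the factor $1/\left(1+\frac{\xi_{|A|}^{2}}{8}\right)$, supremum over unit vectors, then monotonicity plus operator convexity of $f$ to reach the right-hand side of \eqref{43}. However, there is a genuine gap at the one step that carries all the quantitative content: you never produce the refined AM--GM inequality that yields the constant $\frac{\xi_{|A|}^{2}}{8}$, and you explicitly defer it as ``the delicate calibration.'' The paper gets it from a result of Zou and Jiang \cite{18}: for $a,b>0$,
\[
\left(1+\frac{{{\left(\ln a-\ln b\right)}^{2}}}{8}\right)\sqrt{ab}\le \frac{a+b}{2}.
\]
This is a \emph{multiplicative} refinement, and it is exactly what makes $\xi_{|A|}$ enter squared and divided by $8$. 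The form you guess, $\sqrt{ab}\le \frac{1}{2}\left(a+b\right)-r{{\left(\sqrt{a}-\sqrt{b}\right)}^{2}}$, is an \emph{additive} refinement which for $r=\frac{1}{2}$ is an identity; converting that additive deficit into the required multiplicative factor, uniformly in $x$, is precisely the content of the Zou--Jiang inequality (it amounts to proving $\ln t\le \sqrt{t}-\frac{1}{\sqrt{t}}$ for $t\ge 1$), so your plan as written does not close.

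A related misreading: the role of inequality \eqref{41} is not to control the AM--GM deficit. In the paper it serves only to establish $\xi_{|A|}\le \ln \frac{\left\langle \left|A\right|x,x\right\rangle}{\left\langle \left|{{A}^{*}}\right|x,x\right\rangle}$ for every unit vector $x$, i.e.\ to pass from the ratio $\frac{a-b}{a+b}$ appearing in the definition of $\xi_{|A|}$ to the logarithm appearing in the Zou--Jiang bound; hyponormality guarantees both sides are nonnegative, so the inequality may be squared before insertion. Your final step is essentially correct in outline, but it glosses over one fact the paper uses silently in its equality line: for a positive operator $T$ and a non-negative, non-decreasing continuous $f$, one has $f\left(\left\|T\right\|\right)=\left\|f\left(T\right)\right\|$ (spectral mapping), which is what turns $f$ of the scalar bound into the norm of $f$ of the averaged operator before operator convexity and norm-monotonicity on positive operators are applied.
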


\begin{proof}
Since $A$ is a hyponormal operator we have $1\le \frac{\left\langle \left| A
\right|x,x \right\rangle }{\left\langle \left| {{A}^{*}} \right|x,x
\right\rangle }$, for each $x\in \mathcal{H}$. On choosing $\alpha =\frac{%
\left\langle \left| A \right|x,x \right\rangle }{\left\langle \left| {{A}^{*}%
} \right|x,x \right\rangle }$ in \eqref{41} we get 
\begin{equation*}
\left( 0\le \right)\text{ }\frac{\left\langle \left( \left| A \right|-\left| 
{{A}^{*}} \right| \right)x,x \right\rangle }{\left\langle \left( \left| A
\right|+\left| {{A}^{*}} \right| \right)x,x \right\rangle }\le \ln \frac{%
\left\langle \left| A \right|x,x \right\rangle }{\left\langle \left| {{A}^{*}%
} \right|x,x \right\rangle }.
\end{equation*}
Whence 
\begin{equation}  \label{42}
\underset{\left\| x \right\|=1}{\mathop{\inf }}\,\frac{\left\langle \left(
\left| A \right|-\left| {{A}^{*}} \right| \right)x,x \right\rangle }{%
\left\langle \left( \left| A \right|+\left| {{A}^{*}} \right| \right)x,x
\right\rangle }\le \ln \frac{\left\langle \left| A \right|x,x \right\rangle 
}{\left\langle \left| {{A}^{*}} \right|x,x \right\rangle }.
\end{equation}
We denote the expression on the left-hand side of \eqref{42} by ${{\xi }%
_{\left| A \right|}}$. On the other hand Zou et al. in \cite{18} proved that for each $a,b>0$, 
\begin{equation*}
\left( 1+\frac{{{\left( \ln a-\ln b\right) }^{2}}}{8}\right) \sqrt{ab}\leq 
\frac{a+b}{2}.
\end{equation*}%
By taking $a=\left\langle \left\vert A\right\vert x,x\right\rangle $ and $%
b=\left\langle \left\vert {{A}^{\ast }}\right\vert x,x\right\rangle $ and
taking into account that ${{\xi }_{\left\vert A\right\vert }}\leq \ln \frac{%
\left\langle \left\vert A\right\vert x,x\right\rangle }{\left\langle
\left\vert {{A}^{\ast }}\right\vert x,x\right\rangle }$, we infer that 
\begin{equation*}
\sqrt{\left\langle \left\vert A\right\vert x,x\right\rangle \left\langle
\left\vert {{A}^{\ast }}\right\vert x,x\right\rangle }\leq \frac{1}{2\left(
1+\frac{\xi _{\left\vert A\right\vert }^{2}}{8}\right) }\left\langle \left(
\left\vert A\right\vert +\left\vert {{A}^{\ast }}\right\vert \right)
x,x\right\rangle .
\end{equation*}%
By using Lemma \ref{13}, we get 
\begin{equation*}
\left\vert \left\langle Ax,x\right\rangle \right\vert \leq \frac{1}{2\left(
1+\frac{\xi _{\left\vert A\right\vert }^{2}}{8}\right) }\left\langle \left(
\left\vert A\right\vert +\left\vert {{A}^{\ast }}\right\vert \right)
x,x\right\rangle .
\end{equation*}%
Now, by taking supremum over $x\in \mathcal{H},\left\Vert x\right\Vert =1,$
we get%
\begin{equation*}
\omega \left( A\right) \leq \frac{1}{2\left( 1+\frac{\xi _{\left\vert
A\right\vert }^{2}}{8}\right) }\left\Vert \left\vert A\right\vert
+\left\vert {{A}^{\ast }}\right\vert \right\Vert .
\end{equation*}%
Therefore,
\[\begin{aligned}
   f\left( \omega \left( A \right) \right)&\le f\left( \frac{1}{2\left( 1+\frac{\xi _{\left| A \right|}^{2}}{8} \right)}\left\| \left| A \right|+\left| {{A}^{*}} \right| \right\| \right) \\ 
 & =\left\| f\left( \frac{1}{2\left( 1+\frac{\xi _{\left| A \right|}^{2}}{8} \right)}\left| A \right|+\frac{1}{2\left( 1+\frac{\xi _{\left| A \right|}^{2}}{8} \right)}\left| {{A}^{*}} \right| \right) \right\| \\ 
 & \le \frac{1}{2}\left\| f\left( \frac{1}{1+\frac{\xi _{\left| A \right|}^{2}}{8}}\left| A \right| \right)+f\left( \frac{1}{1+\frac{\xi _{\left| A \right|}^{2}}{8}}\left| {{A}^{*}} \right| \right) \right\|.  
\end{aligned}\]
This completes the proof.
\end{proof}
\begin{remark}
Notice that, if $A$ is a normal operator, then ${{\xi }_{\left\vert
A\right\vert }}=0$.
\end{remark}

An important special case of Theorem \ref{44}, which leads to an improvement
and a generalization of inequality \eqref{3} for hyponormal operators, can be
stated as follows.

\begin{corollary}\label{2.1}
Let $A\in \mathcal{B}\left( \mathcal{H}\right) $ be a hyponormal operator.
Then, for all $1\le r\le 2$ we have 
\begin{equation*}
\omega ^{r}\left( A\right) \leq \frac{1}{2\left( 1+\frac{\xi _{\left\vert
A\right\vert }^{2}}{8}\right) ^{r}}\left\Vert \left\vert A\right\vert
^{r}+\left\vert A^{\ast }\right\vert ^{r}\right\Vert ,
\end{equation*}%
where ${{\xi }_{\left\vert A\right\vert }}=\underset{\left\Vert x\right\Vert
=1}{\mathop{\inf }}\,\left\{ \frac{\left\langle \left( \left\vert
A\right\vert -\left\vert {{A}^{\ast }}\right\vert \right) x,x\right\rangle }{%
\left\langle \left( \left\vert A\right\vert +\left\vert {{A}^{\ast }}%
\right\vert \right) x,x\right\rangle }\right\} $. In particular,%
\begin{equation}
\omega \left( A\right) \leq \frac{1}{2\left( 1+\frac{\xi _{\left\vert
A\right\vert }^{2}}{8}\right) }\left\Vert \left\vert A\right\vert
+\left\vert A^{\ast }\right\vert \right\Vert ,  \label{1}
\end{equation}%
and%
\begin{equation*}
\omega ^{2}\left( A\right) \leq \frac{1}{2\left( 1+\frac{\xi _{\left\vert
A\right\vert }^{2}}{8}\right) ^{2}}\left\Vert A^{\ast }A+AA^{\ast
}\right\Vert .
\end{equation*}%
An operator norm inequality which will be used in next corollary says that
for any positive operators $A$,$B\in \mathcal{B}\left( \mathcal{H}\right) ,$ we have
(see \cite{BH}) 
\begin{equation}
\left\Vert A^{r}B^{r}\right\Vert \leq \left\Vert AB\right\Vert ^{r},\qquad \text{ \
for all }  0\leq r\leq 1.  \label{9}
\end{equation}%
The following result refines and generalizes inequality \eqref{5} for
hyponormal operators.
\end{corollary}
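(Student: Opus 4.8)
The plan is to obtain Corollary \ref{2.1} as the special case of Theorem \ref{44} corresponding to the power function $f(t)=t^{r}$. First I would check that, for $1\le r\le 2$, this $f$ satisfies all three hypotheses of Theorem \ref{44} on $[0,\infty)$: it is plainly non-negative and non-decreasing there, and—this is the decisive point—the function $t\mapsto t^{r}$ is operator convex on $[0,\infty)$ precisely when $1\le r\le 2$, by the standard theory of operator convex functions (L\"owner--Heinz). This operator convexity is exactly what pins down the admissible exponent range $1\le r\le 2$ in the statement; nothing in the conclusion is available outside this interval.

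With $f(t)=t^{r}$ admissible, I would substitute into \eqref{43}. Since $f$ is positively homogeneous of degree $r$, for any positive operator $X$ and any scalar $c\ge 0$ we have $f(cX)=(cX)^{r}=c^{r}X^{r}$. Choosing $c=\bigl(1+\tfrac{\xi_{|A|}^{2}}{8}\bigr)^{-1}$ together with $X=|A|$ and $X=|A^{*}|$ in turn, the right-hand side of \eqref{43} becomes
\[
\frac{1}{2}\left\|\frac{1}{\left(1+\frac{\xi_{|A|}^{2}}{8}\right)^{r}}|A|^{r}+\frac{1}{\left(1+\frac{\xi_{|A|}^{2}}{8}\right)^{r}}|A^{*}|^{r}\right\|=\frac{1}{2\left(1+\frac{\xi_{|A|}^{2}}{8}\right)^{r}}\left\||A|^{r}+|A^{*}|^{r}\right\|,
\]
while its left-hand side is $f(\omega(A))=\omega^{r}(A)$. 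This yields the claimed inequality at once.

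For the two displayed special cases I would simply specialize $r$. Taking $r=1$ reproduces \eqref{1} verbatim. Taking $r=2$ gives $\omega^{2}(A)\le \tfrac{1}{2(1+\xi_{|A|}^{2}/8)^{2}}\bigl\||A|^{2}+|A^{*}|^{2}\bigr\|$, and then invoking the identities $|A|^{2}=A^{*}A$ and $|A^{*}|^{2}=AA^{*}$ turns this into the stated bound $\omega^{2}(A)\le \tfrac{1}{2(1+\xi_{|A|}^{2}/8)^{2}}\|A^{*}A+AA^{*}\|$. The whole argument is a direct substitution, so I do not expect a genuine obstacle: the only non-routine ingredient is citing the correct operator-convexity range for $t^{r}$, and everything else reduces to scalar homogeneity of the functional calculus and the two absolute-value identities.
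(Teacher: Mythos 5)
Your proposal is correct and is exactly the route the paper intends: the paper presents Corollary \ref{2.1} as ``an important special case of Theorem \ref{44},'' i.e.\ the substitution $f(t)=t^{r}$ (operator convex, non-negative, non-decreasing on $[0,\infty)$ for $1\le r\le 2$), followed by homogeneity of the functional calculus and the identities $|A|^{2}=A^{*}A$, $|A^{*}|^{2}=AA^{*}$ for the $r=2$ case. No gap; nothing further is needed.
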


\begin{corollary}
\label{c1}Let $A\in \mathcal{B}\left( \mathcal{H}\right) $ be a hyponormal
operator. Then 
\begin{equation*}
\omega ^{r}\left( A\right) \leq \frac{1}{2\left( 1+\frac{\xi _{\left\vert
A\right\vert }^{2}}{8}\right) ^{r}}\left( \left\Vert A\right\Vert
^{r}+\left\Vert \left\vert A\right\vert ^{\frac{r}{2}}\left\vert A^{\ast
}\right\vert ^{\frac{r}{2}}\right\Vert \right),
\end{equation*}%
for all $1\le r\le 2$. In particular
\begin{equation*}
\omega ^{r}\left( A\right) \leq \frac{1}{2\left( 1+\frac{\xi _{\left\vert
A\right\vert }^{2}}{8}\right) ^{r}}\left( \left\Vert A\right\Vert
^{r}+\left\Vert A^{2}\right\Vert ^{\frac{r}{2}}\right),
\end{equation*}%
for $1\leq r\leq 2.$
\end{corollary}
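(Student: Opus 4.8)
The plan is to feed the estimate of Corollary \ref{2.1} into the norm inequality of Lemma \ref{l2}, and then to obtain the second inequality from the operator norm inequality \eqref{9} together with the identity $\||A||A^\ast|\|=\|A^2\|$. Throughout, the hyponormality hypothesis enters only insofar as it is already built into Corollary \ref{2.1}, so no further structural assumption on $A$ is needed.

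First I would fix $1\le r\le 2$ and apply Lemma \ref{l2} to the positive operators $|A|^r$ and $|A^\ast|^r$, which gives
\[\left\||A|^r+|A^\ast|^r\right\|\le \max\left(\left\||A|^r\right\|,\left\||A^\ast|^r\right\|\right)+\left\|\left(|A|^r\right)^{\frac{1}{2}}\left(|A^\ast|^r\right)^{\frac{1}{2}}\right\|.\]
Then I would simplify the right-hand side by spectral calculus: since $\||A|\|^2=\||A|^2\|=\|A^\ast A\|=\|A\|^2$ and likewise $\||A^\ast|\|=\|A\|$, we get $\||A|^r\|=\||A^\ast|^r\|=\|A\|^r$, so the maximum equals $\|A\|^r$; moreover $(|A|^r)^{1/2}=|A|^{r/2}$ and $(|A^\ast|^r)^{1/2}=|A^\ast|^{r/2}$. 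Substituting the resulting bound $\||A|^r+|A^\ast|^r\|\le \|A\|^r+\||A|^{r/2}|A^\ast|^{r/2}\|$ into Corollary \ref{2.1} yields the first asserted inequality.

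For the ``in particular'' statement it remains to bound $\||A|^{r/2}|A^\ast|^{r/2}\|$ by $\|A^2\|^{r/2}$. Since $1\le r\le 2$ forces $r/2\in[\tfrac{1}{2},1]\subseteq[0,1]$, inequality \eqref{9} applied to the positive operators $|A|$ and $|A^\ast|$ with exponent $r/2$ gives $\||A|^{r/2}|A^\ast|^{r/2}\|\le \||A||A^\ast|\|^{r/2}$, so everything reduces to the identity $\||A||A^\ast|\|=\|A^2\|$. This last identity is the only delicate point, and I would establish it from the polar decomposition $A=U|A|$: one has $|A^\ast|=U|A|U^\ast$, whence $A^2=U\,(|A|U|A|)$ and $|A||A^\ast|=(|A|U|A|)\,U^\ast$. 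Writing $X=|A|U|A|$, both $A^2=UX$ and $|A||A^\ast|=XU^\ast$ have norm equal to $\|X\|$, because $U$ is isometric on $\overline{\operatorname{ran}|A|}$, which contains the ranges of $X$ and of $X^\ast$; hence $\|A^2\|=\||A||A^\ast|\|$. Combining this with the previous displays completes the argument. I expect the polar-decomposition identity to be the main obstacle, the rest being routine manipulation of the two cited inequalities.
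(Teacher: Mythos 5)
Your proposal is correct and follows essentially the same route as the paper: apply Lemma \ref{l2} to the positive operators $|A|^{r}$ and $|A^{\ast}|^{r}$, feed the resulting bound into Corollary \ref{2.1} using $\||A|^{r}\|=\||A^{\ast}|^{r}\|=\|A\|^{r}$, and then use inequality \eqref{9} with exponent $r/2$ for the particular case. The only difference is that you supply a sound polar-decomposition argument for the identity $\left\| |A|\,|A^{\ast}| \right\|=\left\| A^{2} \right\|$, which the paper simply asserts without proof.
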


\begin{proof}
Applying Corollary \ref{2.1} and Lemma \ref{l2}, we have
\[\begin{aligned}
   {{\omega }^{r}}\left( A \right)&\le \frac{1}{2{{\left( 1+\frac{\xi _{\left| A \right|}^{2}}{8} \right)}^{r}}}\left\| {{\left| A \right|}^{r}}+{{\left| {{A}^{*}} \right|}^{r}} \right\| \\ 
 & \le \frac{1}{2{{\left( 1+\frac{\xi _{\left| A \right|}^{2}}{8} \right)}^{r}}}\left( \max \left( {{\left\| A \right\|}^{r}},{{\left\| {{A}^{*}} \right\|}^{r}} \right)+\left\| {{\left| A \right|}^{\frac{r}{2}}}{{\left| {{A}^{*}} \right|}^{\frac{r}{2}}} \right\| \right) \\ 
 & =\frac{1}{2{{\left( 1+\frac{\xi _{\left| A \right|}^{2}}{8} \right)}^{r}}}\left( {{\left\| A \right\|}^{r}}+\left\| {{\left| A \right|}^{\frac{r}{2}}}{{\left| {{A}^{*}} \right|}^{\frac{r}{2}}} \right\| \right).  
\end{aligned}\]
For the particular applying inequality \eqref{9}, we have 
\begin{equation*}
\left\Vert \left\vert A\right\vert ^{\frac{r}{2}}\left\vert A^{\ast
}\right\vert ^{\frac{r}{2}}\right\Vert \leq \left\Vert \left\vert
A\right\vert \left\vert A^{\ast }\right\vert \right\Vert ^{\frac{r}{2}%
}=\left\Vert A^{2}\right\Vert ^{\frac{r}{2}},
\end{equation*}%
for $1\leq r\leq 2.$
\end{proof}

\vskip 0.3 true cm Recently, Kian \cite{8} improved Jensen's operator
inequality via superquadratic functions. As an application, he showed that
the following inequality is valid:

\begin{lemma}
\label{14} {{\cite[Example 3.6]{8}}} Let ${{A}_{1}},\ldots ,{{A}_{n}}$ be
positive operators, then 
\begin{equation*}
{{\left\| \sum\limits_{i=1}^{n}{{{w}_{i}}{{A}_{i}}} \right\|}^{r}}\le
\left\| \sum\limits_{i=1}^{n}{{{w}_{i}}A_{i}^{r}} \right\|-\underset{\left\|
x \right\|=1}{\mathop{\inf }}\,\left\{ \sum\limits_{i=1}^{n}{{{w}_{i}}%
\left\langle {{\left| {{A}_{i}}-\sum\limits_{j=1}^{n}{{{w}_{j}}\left\langle {%
{A}_{j}}x,x \right\rangle } \right|}^{r}}x,x \right\rangle } \right\},\qquad
r\ge 2,
\end{equation*}
for each ${{w}_{1}},\ldots ,{{w}_{n}}$ with $\sum\nolimits_{i=1}^{n}{{{w}_{i}%
}}=1$.
\end{lemma}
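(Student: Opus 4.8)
The plan is to reduce the operator statement to the scalar Jensen inequality for \emph{superquadratic} functions, applied to a single probability measure assembled from the spectral measures of the $A_i$ weighted by the $w_i$. Recall that $\varphi$ is superquadratic on $[0,\infty)$ if for each $s\ge 0$ there is a constant $C_s$ with $\varphi(t)\ge \varphi(s)+C_s(t-s)+\varphi(|t-s|)$ for all $t\ge 0$, and that the power $\varphi(t)=t^{r}$ is superquadratic whenever $r\ge 2$ (one may take $C_s=rs^{r-1}$). For such $\varphi$ the Abramovich--Jameson--Sinnamon refinement of Jensen's inequality states that for any probability measure $\mu$ and any nonnegative $\mu$-integrable $g$,
\begin{equation*}
\varphi\!\left(\int g\,d\mu\right)\le \int \varphi(g)\,d\mu-\int \varphi\!\left(\left|g-\textstyle\int g\,d\mu\right|\right)d\mu .
\end{equation*}
I will take this scalar inequality as the engine of the proof.

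First I would fix a unit vector $x$ and pass to spectral measures. For each $i$ let $E_i$ be the spectral measure of the positive operator $A_i$ and set $d\mu_{x,i}(\lambda)=d\langle E_i(\lambda)x,x\rangle$, a probability measure on $[0,\|A_i\|]$. Form the weighted disjoint-union probability measure $\nu_x=\sum_{i=1}^{n} w_i\,\mu_{x,i}$ and let $g$ be the function equal to the spectral variable $\lambda$ on the $i$-th piece. Then, by the functional calculus,
\begin{equation*}
\int g\,d\nu_x=\sum_i w_i\langle A_i x,x\rangle=:m_x,\qquad \int \varphi(g)\,d\nu_x=\sum_i w_i\langle A_i^{\,r}x,x\rangle,
\end{equation*}
and $\int \varphi(|g-m_x|)\,d\nu_x=\sum_i w_i\langle |A_i-m_x|^{\,r}x,x\rangle$. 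Applying the displayed scalar inequality to $\nu_x$ and $g$ with $\varphi(t)=t^{r}$ yields, for every unit vector $x$, the pointwise estimate
\begin{equation*}
m_x^{\,r}+\sum_i w_i\big\langle |A_i-m_x|^{\,r}x,x\big\rangle\le \sum_i w_i\langle A_i^{\,r}x,x\rangle. \tag{$\star$}
\end{equation*}

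Finally I would optimize over $x$. Since each $A_i$ is positive and $\sum_i w_i=1$ with $w_i\ge0$, the operator $\sum_i w_iA_i$ is positive and $m_x=\langle(\sum_i w_iA_i)x,x\rangle\ge0$, so $\|\sum_i w_iA_i\|=\sup_{\|x\|=1}m_x$; as $t\mapsto t^{r}$ is increasing on $[0,\infty)$ this gives $\|\sum_i w_iA_i\|^{r}=\sup_{\|x\|=1}m_x^{\,r}$. In $(\star)$ I bound $\sum_i w_i\langle A_i^{\,r}x,x\rangle\le\|\sum_i w_iA_i^{\,r}\|$ and $\sum_i w_i\langle|A_i-m_x|^{\,r}x,x\rangle\ge \inf_{\|y\|=1}\sum_i w_i\langle|A_i-m_y|^{\,r}y,y\rangle$, the latter being exactly the infimum in the statement (note $m_y=\sum_j w_j\langle A_jy,y\rangle$). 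Rearranging $(\star)$ then gives $m_x^{\,r}\le\|\sum_i w_iA_i^{\,r}\|-\inf_{\|y\|=1}\{\cdots\}$ uniformly in $x$, and taking the supremum over $x$ on the left completes the proof.

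The main obstacle I anticipate is bookkeeping in this last step rather than any deep difficulty: the remainder must be handled uniformly, so one has to lower-bound the (nonnegative) deviation term in $(\star)$ by its infimum \emph{before} taking the supremum in $x$, and one has to justify interchanging the $r$-th power with the supremum via monotonicity. A secondary technical point is making the disjoint-union spectral measure rigorous; equivalently one may invoke Kian's operator form of the superquadratic Jensen inequality directly instead of reconstructing it from the scalar case. No genuinely new estimate is needed beyond the superquadraticity of $t^{r}$ for $r\ge 2$.
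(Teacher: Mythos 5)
Your proof is correct. Note that the paper itself gives no proof of this lemma at all---it is quoted verbatim from Kian \cite[Example 3.6]{8}---and your argument is essentially a faithful reconstruction of that cited result: your pointwise estimate $(\star)$ is precisely Kian's operator Jensen inequality for superquadratic functions (obtained there, as in your sketch, from the scalar Abramovich--Jameson--Sinnamon refinement together with the functional calculus), and your final sup/inf bookkeeping, lower-bounding the deviation term by its infimum before taking the supremum of $m_x^r$, is exactly how the norm inequality of Example 3.6 follows from it.
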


\vskip 0.3 true cm This, in turn, leads to the following:

\begin{theorem}
\label{19} Let $A\in \mathcal{B}\left( \mathcal{H} \right)$, then 
\begin{equation}  \label{2}
{{\omega }^{2}}\left( A \right)\le \frac{1}{2}\left( \left\| {{\left| A
\right|}^{2}}+{{\left| {{A}^{*}} \right|}^{2}} \right\|-\underset{\left\| x
\right\|=1}{\mathop{\inf }}\,\xi \left( x \right) \right),
\end{equation}
where $\xi \left( x \right)=\left\langle \left( {{\left| \left| A \right|-%
\frac{1}{2}\left\langle \left( \left| A \right|+\left| {{A}^{*}} \right|
\right)x,x \right\rangle \right|}^{2}}+{{\left| \left| {{A}^{*}} \right|-%
\frac{1}{2}\left\langle \left( \left| A \right|+\left| {{A}^{*}} \right|
\right)x,x \right\rangle \right|}^{2}} \right)x,x \right\rangle $.
\end{theorem}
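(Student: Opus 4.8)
The plan is to combine two ingredients: an elementary numerical-radius bound extracted from the mixed Schwarz inequality, and the refined inequality of Lemma \ref{14} specialized to the right data. The target \eqref{2} will then follow by simply chaining the two estimates.

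First I would record the bound $\omega \left( A\right) \le \frac{1}{2}\left\Vert \left\vert A\right\vert +\left\vert A^{\ast }\right\vert \right\Vert$. Taking $y=x$ in Lemma \ref{13} gives $\left\vert \left\langle Ax,x\right\rangle \right\vert \le \left\langle \left\vert A\right\vert x,x\right\rangle ^{1/2}\left\langle \left\vert A^{\ast }\right\vert x,x\right\rangle ^{1/2}$, and the scalar AM--GM inequality turns the right-hand side into $\frac{1}{2}\left\langle \left( \left\vert A\right\vert +\left\vert A^{\ast }\right\vert \right) x,x\right\rangle$. Taking the supremum over unit vectors $x$ and squaring yields
\[
\omega ^{2}\left( A\right) \le \frac{1}{4}\left\Vert \left\vert A\right\vert +\left\vert A^{\ast }\right\vert \right\Vert ^{2}.
\]
Note that this step uses neither hyponormality nor any special structure, which is consistent with Theorem \ref{19} being stated for an arbitrary $A\in \mathcal{B}\left( \mathcal{H}\right)$.

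Second, I would specialize Lemma \ref{14} to the parameters $n=2$, $r=2$, weights $w_{1}=w_{2}=\frac{1}{2}$, and positive operators $A_{1}=\left\vert A\right\vert$, $A_{2}=\left\vert A^{\ast }\right\vert$. With this choice the weighted scalar average becomes $\sum_{j}w_{j}\left\langle A_{j}x,x\right\rangle =\frac{1}{2}\left\langle \left( \left\vert A\right\vert +\left\vert A^{\ast }\right\vert \right) x,x\right\rangle$, which is exactly the constant subtracted inside both terms of $\xi \left( x\right)$. Consequently the left-hand side of Lemma \ref{14} equals $\frac{1}{4}\left\Vert \left\vert A\right\vert +\left\vert A^{\ast }\right\vert \right\Vert ^{2}$, the first term on the right equals $\frac{1}{2}\left\Vert \left\vert A\right\vert ^{2}+\left\vert A^{\ast }\right\vert ^{2}\right\Vert$, and the infimum term collapses precisely to $\frac{1}{2}\inf_{\left\Vert x\right\Vert =1}\xi \left( x\right)$, giving
\[
\frac{1}{4}\left\Vert \left\vert A\right\vert +\left\vert A^{\ast }\right\vert \right\Vert ^{2}\le \frac{1}{2}\left\Vert \left\vert A\right\vert ^{2}+\left\vert A^{\ast }\right\vert ^{2}\right\Vert -\frac{1}{2}\underset{\left\Vert x\right\Vert =1}{\inf }\,\xi \left( x\right).
\]
Chaining this with the first display and factoring out $\frac{1}{2}$ produces exactly \eqref{2}.

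The step requiring the most care is the bookkeeping in the specialization of Lemma \ref{14}: I must verify that the scalar $\sum_{j}w_{j}\left\langle A_{j}x,x\right\rangle$ matches the quantity $\frac{1}{2}\left\langle \left( \left\vert A\right\vert +\left\vert A^{\ast }\right\vert \right) x,x\right\rangle$ appearing inside each $\left\vert A_{i}-\cdot \right\vert ^{2}$ term of $\xi \left( x\right)$, and that the weights $\frac{1}{2}$ combine with the coefficients in Lemma \ref{14} to reproduce the stated constants. Once this identification is made there is no genuine analytic obstacle; the argument is a clean composition of the Schwarz/AM--GM bound with the sharpened Jensen-type estimate.
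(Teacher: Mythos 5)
Your proposal is correct and follows essentially the same route as the paper: both start from $\omega^{2}(A)\le \frac{1}{4}\left\Vert \left\vert A\right\vert +\left\vert A^{\ast }\right\vert \right\Vert ^{2}$ and then apply Lemma \ref{14} with $n=2$, $r=2$, $w_{1}=w_{2}=\frac{1}{2}$, $A_{1}=\left\vert A\right\vert$, $A_{2}=\left\vert A^{\ast }\right\vert$, chaining the two estimates. The only difference is cosmetic: you spell out the mixed Schwarz/AM--GM justification of the first bound, which the paper leaves as an easy observation.
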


\begin{proof}
One can easily see that for each $A\in \mathcal{B}\left( \mathcal{H}\right) $ we have
\begin{equation*}
\omega \left( A\right) \leq \frac{1}{2}\left\Vert \left\vert A\right\vert
+\left\vert {{A}^{\ast }}\right\vert \right\Vert ,
\end{equation*}%
we can also write 
\begin{equation}
{{\omega }^{2}}\left( A\right) \leq \frac{1}{4}{{\left\Vert \left\vert
A\right\vert +\left\vert {{A}^{\ast }}\right\vert \right\Vert }^{2}}.
\label{4}
\end{equation}%
Choosing $n,r=2,\text{ }{{w}_{1}}={{w}_{2}}=\frac{1}{2},\text{ }{{A}_{1}}%
=\left\vert A\right\vert $ and ${{A}_{2}}=\left\vert {{A}^{\ast }}%
\right\vert $ in Lemma \ref{14}, we infer 
\begin{equation*}
\begin{aligned} {{\left\| \left| A \right|+\left| {{A}^{*}} \right|
\right\|}^{2}}&\le 2 \left( \left\| {{\left| A \right|}^{2}}+{{\left|
{{A}^{*}} \right|}^{2}} \right\| \right.-\underset{\left\| x
\right\|=1}{\mathop{\inf }}\,\left\{ \left\langle {{\left| \left| A
\right|-\frac{1}{2}\left( \left\langle \left| A \right|x,x \right\rangle
+\left\langle \left| {{A}^{*}} \right|x,x \right\rangle \right)
\right|}^{2}}x,x \right\rangle \right. \\ &\quad \left. \left. +\left\langle
{{\left| \left| {{A}^{*}} \right|-\frac{1}{2}\left( \left\langle \left| A
\right|x,x \right\rangle +\left\langle \left| {{A}^{*}} \right|x,x
\right\rangle \right) \right|}^{2}}x,x \right\rangle \right\} \right). \\
\end{aligned}
\end{equation*}

It now follows from \eqref{4} that 
\begin{equation*}
\begin{aligned} {{\omega }^{2}}\left( A \right)&\le \frac{1}{4}{{\left\|
\left| A \right|+\left| {{A}^{*}} \right| \right\|}^{2}} \\ & \le
\frac{1}{2}\left( \left\| {{\left| A \right|}^{2}}+{{\left| {{A}^{*}}
\right|}^{2}} \right\| \right.-\underset{\left\| x \right\|=1}{\mathop{\inf
}}\,\left\{ \left\langle {{\left| \left| A \right|-\frac{1}{2}\left(
\left\langle \left| A \right|x,x \right\rangle +\left\langle \left|
{{A}^{*}} \right|x,x \right\rangle \right) \right|}^{2}}x,x \right\rangle
\right. \\ &\quad \left. \left. +\left\langle {{\left| \left| {{A}^{*}}
\right|-\frac{1}{2}\left( \left\langle \left| A \right|x,x \right\rangle
+\left\langle \left| {{A}^{*}} \right|x,x \right\rangle \right)
\right|}^{2}}x,x \right\rangle \right\} \right). \end{aligned}
\end{equation*}

The validity of this inequality is just Theorem \ref{19}.
\end{proof}

\begin{remark}
Notice that 
\begin{equation*}
\underset{\left\| x \right\|=1}{\mathop{\inf }}\,\xi \left( x \right)>0\text{
}\Leftrightarrow \text{ }0\notin \overline{W\left( {{\left| \left| A \right|-%
\frac{1}{2}\left\langle \left( \left| A \right|+\left| {{A}^{*}} \right|
\right)x,x \right\rangle \right|}^{2}}+{{\left| \left| {{A}^{*}} \right|-%
\frac{1}{2}\left\langle \left( \left| A \right|+\left| {{A}^{*}} \right|
\right)x,x \right\rangle \right|}^{2}} \right)}.
\end{equation*}
\end{remark}
\vskip 0.3 true cm
To make things a bit clearer, we consider the following example:
\begin{example}
Taking $A=\left( \begin{matrix}
   0 & 0  \\
   3 & 0  \\
\end{matrix} \right)$. By an easy computation we find that
	\[{{\left| \left| A \right|-\frac{1}{2}\left\langle \left( \left| A \right|+\left| {{A}^{*}} \right| \right)x,x \right\rangle  \right|}^{2}}+{{\left| \left| {{A}^{*}} \right|-\frac{1}{2}\left\langle \left( \left| A \right|+\left| {{A}^{*}} \right| \right)x,x \right\rangle  \right|}^{2}}=\left( \begin{matrix}
   4.5 & 0  \\
   0 & 4.5  \\
\end{matrix} \right).\] 
It is well-known that, $A=\lambda I$ if and only if $W\left( A \right)=\left\{ \lambda  \right\}$ (see, e.g., {{\cite[Section 18]{book}}}). So we get $\underset{\left\| x \right\|=1}{\mathop{\inf }}\,\xi \left( x \right)=4.5>0$.
\end{example}
This shows that the inequality \eqref{2} provides an improvement for the second inequality in \eqref{3}.
\section{\bf Lower bounds for the numerical radii}

\vskip 0.4 true cm The next theorem is slightly more intricate.

\begin{theorem}
\label{21} Let $A\in \mathcal{B}\left( \mathcal{H} \right)$, then 
\begin{equation}  \label{8}
\left\| A \right\|\left( 1-\frac{1}{2}{{\left\| I-\frac{A}{\left\| A \right\|} \right\|}^{2}} \right)\le \omega \left( A \right).
\end{equation}
\end{theorem}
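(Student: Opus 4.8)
The plan is to exploit the variational definition of the operator norm together with the elementary identity relating $\langle Ax,x\rangle$, $\|Ax\|$ and $\|(A-\|A\|I)x\|$. I may assume $A\neq 0$ (otherwise the quantity $A/\|A\|$ is undefined), and I set $M=\|A\|$ and $N=\|MI-A\|$. Since $I-A/M=(MI-A)/M$, we have $\|I-A/M\|=N/M$, and a direct rewriting shows that the claimed bound \eqref{8} is equivalent to
\[
\omega(A)\ge M-\frac{N^{2}}{2M}.
\]
So it suffices to produce this lower bound.

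Because $M=\sup_{\|x\|=1}\|Ax\|$, the first step is to fix a sequence of unit vectors $(x_{n})$ with $\|Ax_{n}\|\to M$. The crucial computation is the expansion
\[
\|(A-MI)x_{n}\|^{2}=\|Ax_{n}\|^{2}-2M\,\mathrm{Re}\langle Ax_{n},x_{n}\rangle+M^{2},
\]
valid because each $x_{n}$ is a unit vector. Bounding the left-hand side by $\|A-MI\|^{2}=N^{2}$ and solving for the real part yields
\[
\mathrm{Re}\langle Ax_{n},x_{n}\rangle\ge\frac{\|Ax_{n}\|^{2}+M^{2}-N^{2}}{2M}.
\]

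Next I would feed this into the trivial lower bounds $\omega(A)\ge|\langle Ax_{n},x_{n}\rangle|\ge\mathrm{Re}\langle Ax_{n},x_{n}\rangle$, which hold for every $n$ directly from the definition of $\omega(\cdot)$. Combining with the previous display gives $\omega(A)\ge\frac{\|Ax_{n}\|^{2}+M^{2}-N^{2}}{2M}$ for each $n$; letting $n\to\infty$ and using $\|Ax_{n}\|^{2}\to M^{2}$, the right-hand side converges to $\frac{2M^{2}-N^{2}}{2M}=M-\frac{N^{2}}{2M}$. Hence $\omega(A)\ge M-\frac{N^{2}}{2M}$, and substituting $N/M=\|I-A/M\|$ recovers exactly \eqref{8}.

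The only delicate point is that in infinite dimensions the supremum defining $\|A\|$ need not be attained at a single vector, so I cannot simply plug in a maximizer; the maximizing sequence $(x_{n})$ and the limiting argument are what make the step $\|Ax_{n}\|\to M$ rigorous. Everything else is a one-line algebraic rearrangement. It is worth emphasizing that the decisive modeling choice is to select $x_{n}$ so as to maximize $\|Ax\|$ (rather than, say, $\|(A-MI)x\|$): this is precisely what forces the term $\|Ax_{n}\|^{2}$ to contribute the full $M^{2}$, producing the constant $M$ in the bound instead of the weaker $M/2$ that a naive estimate would give.
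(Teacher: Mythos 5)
Your proof is correct, but it takes a genuinely different route from the paper's. The paper first establishes the elementary inequality $1-\tfrac{1}{2}\|x-y\|^{2}\le \left| \left\langle x,y \right\rangle \right|$ for unit vectors $x,y$, applies it with $y=Ax/\|Ax\|$ to obtain $\|Ax\|\bigl(1-\tfrac{1}{2}\bigl\|x-\tfrac{Ax}{\|Ax\|}\bigr\|^{2}\bigr)\le \left| \left\langle Ax,x \right\rangle \right|$, then replaces the vector-dependent quantity $\bigl\|x-\tfrac{Ax}{\|Ax\|}\bigr\|$ by the operator norm $\bigl\|I-\tfrac{A}{\|A\|}\bigr\|$ and takes a supremum. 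You instead fix a maximizing sequence $\|Ax_{n}\|\to M=\|A\|$, expand $\|(A-MI)x_{n}\|^{2}$, bound it by $\|A-MI\|^{2}$, and pass to the limit. Both arguments rest on the same identity $\|u-v\|^{2}=\|u\|^{2}-2\,\mathrm{Re}\left\langle u,v \right\rangle+\|v\|^{2}$ together with $\omega(A)\ge \mathrm{Re}\left\langle Ax,x \right\rangle$, but your execution is tighter in one significant respect: the paper's replacement step needs $\bigl\|\bigl(I-\tfrac{A}{\|Ax\|}\bigr)x\bigr\|\le \bigl\|I-\tfrac{A}{\|A\|}\bigr\|$, which does not follow from $\|Ax\|\le\|A\|$ — the scaling goes the wrong way, since $\tfrac{A}{\|Ax\|}$ is a \emph{larger} multiple of $A$ than $\tfrac{A}{\|A\|}$; for instance, with $A=\mathrm{diag}(1,-\varepsilon)$ and $x=e_{2}$ the left-hand side equals $2$ while the right-hand side is $1+\varepsilon$. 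Your bound $\|(A-MI)x_{n}\|\le\|A-MI\|$ is unconditionally valid, so your maximizing-sequence argument not only proves the theorem but also repairs this gap in the published proof. The only price is the limiting argument required because the operator norm need not be attained in infinite dimensions, and you handle that point explicitly and correctly.
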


\begin{proof}
It is easy to check that 
\begin{equation}  \label{15}
1-\frac{1}{2}{{\left\| \frac{x}{\left\| x \right\|}-\frac{y}{\left\| y
\right\|} \right\|}^{2}}\le \frac{1}{\left\| x \right\|\left\| y \right\|}%
\left| \left\langle x,y \right\rangle \right|,
\end{equation}
for every $x,y\in \mathcal{H}$.

If we choose $\left\| x \right\|=\left\| y \right\|=1$ in \eqref{15} we get 
\begin{equation}  \label{16}
1-\frac{1}{2}{{\left\| x-y \right\|}^{2}}\le \left| \left\langle x,y \right\rangle  \right|.
\end{equation}
This is an interesting inequality in itself as well. Now taking $y=\frac{Ax}{%
\left\| Ax \right\|}$ in \eqref{16}, we infer 
\begin{equation}  \label{17}
\left\| Ax \right\|\left( 1-\frac{1}{2}{{\left\| x-\frac{Ax}{\left\| Ax \right\|} \right\|}^{2}} \right)\le \left| \left\langle Ax,x \right\rangle  \right|.
\end{equation}
Since $\left\| x \right\|=1$, $\left\| Ax \right\|$ does not exceed $\left\| A \right\|$. Hence we get from \eqref{17} that 
\begin{equation*}
\left\| Ax \right\|\left( 1-\frac{1}{2}{{\left\| I-\frac{A}{\left\| A \right\|} \right\|}^{2}} \right)\le \left| \left\langle Ax,x \right\rangle  \right|.
\end{equation*}
Now by taking supremum over $x\in \mathcal{H}$ with $\left\| x \right\|=1$,
we deduce the desired inequality \eqref{8}.
\end{proof}

\begin{remark}
It is striking that if $\left\| A-\left\| A \right\| \right\|\le \left\| A
\right\|$, then inequality \eqref{8} provides an improvement for the first
inequality in \eqref{7}.
\end{remark}

\begin{example}
Taking $A=\left( \begin{matrix}
   2 & 1  \\
   0 & 4  \\
\end{matrix} \right)$. Then  $\left\| A \right\|\simeq 4.1594$ and $\left\| A-\left\| A \right\| \right\|\simeq 2.3807$. We obtain by easy computation 
\[\frac{1}{2}\left\| A \right\|\simeq 2.079,\qquad \left\| A \right\|\left( 1-\frac{1}{2}\left\| I-\frac{A}{\left\| A \right\|} \right\| \right)\simeq 2.968,\qquad \omega \left( A \right)\simeq 4.118,\]
whence
\[\frac{1}{2}\left\| A \right\|\lvertneqq \left\| A \right\|\left( 1-\frac{1}{2}\left\| I-\frac{A}{\left\| A \right\|} \right\| \right)\lvertneqq \omega \left( A \right),\]
which shows that if $\left\| A-\left\| A \right\| \right\|\le \left\| A
\right\|$, then inequality \eqref{8} is really an improvement of the first
inequality in \eqref{7}.
\end{example}
\vskip 0.3 true cm
 The following basic lemma is essentially known as in {{
\cite[Lemma 1]{13}}}, but our expression is a little bit different from
those in \cite{13}. For the sake of convenience, we give it a slim proof.
\begin{lemma}
Let $x,y,{{z}_{i}},\text{} i=1,\ldots ,n$ be nonzero vectors and $
\left\langle {{z}_{j}},{{z}_{i}} \right\rangle \ne 0$, then 
\begin{equation}  \label{30}
{{\left| \left\langle x-\sum\limits_{i}{\frac{\left\langle x,{{z}_{i}}
\right\rangle }{\sum\nolimits_{j}{\left| \left\langle {{z}_{j}},{{z}_{i}}
\right\rangle \right|}}{{z}_{i}}},y \right\rangle \right|}^{2}}\le {{\left\|
y \right\|}^{2}}\left( {{\left\| x \right\|}^{2}}-\sum\limits_{i}{\frac{{{%
\left| \left\langle x,{{z}_{i}} \right\rangle \right|}^{2}}}{%
\sum\nolimits_{j}{\left| \left\langle {{z}_{i}},{{z}_{j}} \right\rangle
\right|}}} \right).
\end{equation}
\end{lemma}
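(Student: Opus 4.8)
The plan is to recognize \eqref{30} as a Bessel-type inequality and to prove it by first peeling off the vector $y$ with the Cauchy--Schwarz inequality, then carefully estimating the resulting quadratic form. Write $d_i=\sum_j\left|\left\langle z_i,z_j\right\rangle\right|$, so that the coefficients appearing in \eqref{30} are exactly $c_i=\langle x,z_i\rangle/d_i$, and set $w=x-\sum_i c_i z_i$. The left-hand side of \eqref{30} is then precisely $|\langle w,y\rangle|^2$, and Cauchy--Schwarz gives $|\langle w,y\rangle|^2\le\|y\|^2\,\|w\|^2$. Hence it suffices to establish the $y$-free vector inequality
\[\|w\|^2\le\|x\|^2-\sum_i\frac{|\langle x,z_i\rangle|^2}{d_i}.\]

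Next I would expand $\|w\|^2=\langle x-\sum_i c_i z_i,\,x-\sum_k c_k z_k\rangle$. Because $\overline{c_k}=\langle z_k,x\rangle/d_k$ and $d_k$ is real, each of the two mixed terms collapses to $\sum_i|\langle x,z_i\rangle|^2/d_i$, so that
\[\|w\|^2=\|x\|^2-2\sum_i\frac{|\langle x,z_i\rangle|^2}{d_i}+\sum_{i,k}c_i\overline{c_k}\langle z_i,z_k\rangle.\]
Comparing this with the target inequality, the entire problem reduces to controlling the remaining quadratic term by a single sum, namely
\[\sum_{i,k}c_i\overline{c_k}\langle z_i,z_k\rangle\le\sum_i\frac{|\langle x,z_i\rangle|^2}{d_i}.\]

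The main obstacle, and the heart of the argument, is this last estimate, since the double sum couples all indices through the Gram-type entries $\langle z_i,z_k\rangle$. I would first pass to absolute values, bounding the (real) left-hand side by $\sum_{i,k}|c_i|\,|c_k|\,|\langle z_i,z_k\rangle|$, and then invoke the AM--GM inequality in the form $|c_i||c_k|\le\tfrac12(|c_i|^2+|c_k|^2)$, recalling that $|c_i|=|\langle x,z_i\rangle|/d_i$. Exploiting the symmetry $|\langle z_i,z_k\rangle|=|\langle z_k,z_i\rangle|$, the two sums produced by AM--GM coincide, and summing $|\langle z_i,z_k\rangle|$ over $k$ reproduces exactly the normalizing factor $d_i$; this cancels one power of $d_i$ and leaves precisely $\sum_i|\langle x,z_i\rangle|^2/d_i$, as required. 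Substituting back into the expansion of $\|w\|^2$ yields the displayed vector bound, and combining it with the initial Cauchy--Schwarz step completes the proof. The only delicate bookkeeping is keeping track of the conjugates so that each mixed term is genuinely real; once the abbreviation $d_i$ is fixed, this is routine.
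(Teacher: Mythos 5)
Your proof is correct, and it is in fact more self-contained than the paper's. The paper follows the same skeleton: it defines the same vector $u=x-\sum_i\frac{\langle x,z_i\rangle}{\sum_j\left|\langle z_j,z_i\rangle\right|}z_i$, states the norm bound
\[
\left\|u\right\|^2\le\left\|x\right\|^2-\sum_i\frac{\left|\langle x,z_i\rangle\right|^2}{\sum_j\left|\langle z_i,z_j\rangle\right|},
\]
and then multiplies by $\left\|y\right\|^2$ and applies Cauchy--Schwarz, exactly as you do (you apply Cauchy--Schwarz first, which is immaterial). The difference is that the paper never proves this norm bound: it is simply asserted with a ``Whence,'' the lemma being described as ``essentially known'' from \cite{13}, where it arises in the context of Selberg's inequality. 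That bound is precisely your inequality $\left\|w\right\|^2\le\left\|x\right\|^2-\sum_i\left|\langle x,z_i\rangle\right|^2/d_i$, and your expansion-plus-AM--GM argument supplies a genuine elementary proof of it: the expansion gives $\left\|w\right\|^2=\left\|x\right\|^2-2\sum_i\left|\langle x,z_i\rangle\right|^2/d_i+\sum_{i,k}c_i\overline{c_k}\langle z_i,z_k\rangle$; the double sum equals $\bigl\|\sum_ic_iz_i\bigr\|^2$, hence is real and nonnegative, which legitimizes passing to absolute values; and the symmetrization $|c_i||c_k|\le\frac{1}{2}\left(|c_i|^2+|c_k|^2\right)$ combined with $\sum_k\left|\langle z_i,z_k\rangle\right|=d_i$ collapses the double sum to $\sum_i|c_i|^2d_i=\sum_i\left|\langle x,z_i\rangle\right|^2/d_i$, as you say. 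This is the classical proof of the Selberg inequality, and it also explains why the normalizers $d_i$ are exactly the right ones: they are what makes the AM--GM symmetrization close up. In short, you use the same decomposition and the same appeal to Cauchy--Schwarz as the paper, but where the paper cites, you prove; your version costs a few lines of bookkeeping and buys an argument that stands entirely on its own.
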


\begin{proof}
Define 
\begin{equation*}
u=x-\sum\limits_{i}{\frac{\left\langle x,{{z}_{i}} \right\rangle }{%
\sum\nolimits_{j}{\left| \left\langle {{z}_{j}},{{z}_{i}} \right\rangle
\right|}}{{z}_{i}}}.
\end{equation*}
Whence 
\begin{equation}  \label{31}
{{\left\| u \right\|}^{2}}={{\left\| x-\sum\limits_{i}{{{a}_{i}}{{z}_{i}}}
\right\|}^{2}}\le {{\left\| x \right\|}^{2}}-\sum\limits_{i}{\frac{{{\left|
\left\langle x,{{z}_{i}} \right\rangle \right|}^{2}}}{\sum\nolimits_{j}{%
\left| \left\langle {{z}_{i}},{{z}_{j}} \right\rangle \right|}}}.
\end{equation}
By multiplying both sides \eqref{31} by ${{\left\| y \right\|}^{2}}$ and
then utilizing the Cauchy Schwarz inequality we get 
\begin{equation*}
{{\left| \left\langle u,y \right\rangle \right|}^{2}}\le {{\left\| y \right\|%
}^{2}}\left( {{\left\| x \right\|}^{2}}-\sum\limits_{i}{\frac{{{\left|
\left\langle x,{{z}_{i}} \right\rangle \right|}^{2}}}{\sum\nolimits_{j}{%
\left| \left\langle {{z}_{i}},{{z}_{j}} \right\rangle \right|}}} \right),
\end{equation*}
which is exactly desired inequality \eqref{30}.
\end{proof}
\vskip 0.3 true cm Finally, we state the last result.
\begin{theorem}\label{d}
\label{32} Let $A\in \mathcal{B}\left( \mathcal{H} \right)$ be an invertible operator, then 
\begin{equation*}
\underset{\left\| x \right\|=1}{\mathop{\inf }}\,{{\xi }^{2}}\left( x
\right)+{{\omega }^{2}}\left( A \right)\le {{\left\| A \right\|}^{2}},
\end{equation*}
where $\xi \left( x \right)=\frac{\left| \left\langle {{A}^{2}}x,x
\right\rangle -{{\left\langle Ax,x \right\rangle }^{2}} \right|}{\left\| {{A}%
^{*}}x \right\|}$.
\end{theorem}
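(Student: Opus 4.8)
The plan is to reduce the statement to a pointwise estimate and then feed a single well-chosen vector into the auxiliary inequality \eqref{30}. First I would note that it suffices to prove, for every unit vector $x\in\mathcal{H}$, the bound
\[
\xi^2(x)+\left|\left\langle Ax,x\right\rangle\right|^2\le \|A\|^2 .
\]
Indeed, since $\inf_{\|y\|=1}\xi^2(y)\le\xi^2(x)$ for each unit vector $x$, this pointwise inequality gives $\inf_{\|y\|=1}\xi^2(y)+|\langle Ax,x\rangle|^2\le\|A\|^2$; taking the supremum over all unit vectors $x$ turns the middle term into $\sup_{\|x\|=1}|\langle Ax,x\rangle|^2=\omega^2(A)$, which is exactly the asserted inequality.

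To prove the pointwise bound I would apply \eqref{30} with $n=1$, taking the single vector $z_1=x$ and feeding $Ax$ and $A^{\ast}x$ into the two free slots (the roles of $x$ and $y$ in \eqref{30}). Here $\|x\|=1$, and invertibility of $A$ forces $Ax\neq0$ and $A^{\ast}x\neq0$, so all hypotheses of the lemma are met and $\|A^{\ast}x\|>0$. Using the elementary identities $\langle Ax,A^{\ast}x\rangle=\langle A^2x,x\rangle$ and $\langle x,A^{\ast}x\rangle=\langle Ax,x\rangle$ together with $\|z_1\|^2=1$, the left-hand side of \eqref{30} collapses to $\bigl|\langle A^2x,x\rangle-\langle Ax,x\rangle^2\bigr|^2$, while the right-hand side becomes $\|A^{\ast}x\|^2\bigl(\|Ax\|^2-|\langle Ax,x\rangle|^2\bigr)$. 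Dividing through by $\|A^{\ast}x\|^2$ yields precisely
\[
\xi^2(x)\le \|Ax\|^2-\left|\left\langle Ax,x\right\rangle\right|^2 .
\]

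The final step is immediate: rearranging gives $\xi^2(x)+|\langle Ax,x\rangle|^2\le\|Ax\|^2$, and since $\|Ax\|\le\|A\|$ for a unit vector $x$, the desired pointwise bound follows, completing the argument. I expect the only genuinely delicate point to be spotting the correct substitution into \eqref{30} — namely that $z_1=x$ with $Ax$ and $A^{\ast}x$ in the remaining slots makes the numerator $\langle A^2x,x\rangle-\langle Ax,x\rangle^2$ and the denominator $\|A^{\ast}x\|^2$ appear simultaneously. Once this is identified, everything reduces to verifying the two inner-product identities above and performing a single division, with the invertibility hypothesis used solely to guarantee $\|A^{\ast}x\|\neq0$ so that $\xi(x)$ is well defined and the division is legitimate.
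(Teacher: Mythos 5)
Your proposal is correct and follows essentially the same route as the paper: both apply the $n=1$ case of the auxiliary inequality \eqref{30} with the substitution $x\mapsto Ax$, $y\mapsto A^{\ast}x$, $z\mapsto x$ for a unit vector $x$, obtain the pointwise bound $\xi^{2}(x)+\left|\left\langle Ax,x\right\rangle\right|^{2}\le\left\Vert Ax\right\Vert^{2}\le\left\Vert A\right\Vert^{2}$, and then pass to the infimum and supremum exactly as you describe. The only (immaterial) difference is that the paper rearranges the $n=1$ inequality into a two-term form before substituting, whereas you substitute first and divide by $\left\Vert A^{\ast}x\right\Vert^{2}$ afterwards.
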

\begin{proof}
Simplifying \eqref{30} for the case $n=1$, we find that 
\begin{equation*}
{{\left| \left\langle x,y \right\rangle -\frac{\left\langle x,z
\right\rangle }{{{\left\| z \right\|}^{2}}}\left\langle z,y \right\rangle
\right|}^{2}}+\frac{{{\left| \left\langle x,z \right\rangle \right|}^{2}}}{{{%
\left\| z \right\|}^{2}}}{{\left\| y \right\|}^{2}}\le {{\left\| x \right\|}%
^{2}}{{\left\| y \right\|}^{2}}.
\end{equation*}
Apply these considerations to $x=Ax$, $y={{A}^{*}}x$ and $z=x$ with $\left\|
x \right\|=1$ we deduce 
\begin{equation}  \label{33}
{{\left( \frac{\left| \left\langle {{A}^{2}}x,x \right\rangle -{{%
\left\langle Ax,x \right\rangle }^{2}} \right|}{\left\| {{A}^{*}}x \right\|}
\right)}^{2}}+{{\left| \left\langle Ax,x \right\rangle \right|}^{2}}\le {{%
\left\| Ax \right\|}^{2}}.
\end{equation}
We denote the first expression on the left-hand side of \eqref{33} by $\xi
\left( x \right)$. Whence \eqref{33} implies that 
\begin{equation*}
\underset{\left\| x \right\|=1}{\mathop{\inf }}\,{{\xi }^{2}}\left( x
\right)+{{\left| \left\langle Ax,x \right\rangle \right|}^{2}}\le {{\left\|
Ax \right\|}^{2}}.
\end{equation*}
Now, the result follows by taking the supremum over all unit vectors in $%
\mathcal{H}$.
\end{proof}
\begin{remark}\label{3.3}
Of course, if $A$ is a normal operator we must have $\xi \left( x \right)=0$. In this regard, we have: 
\begin{enumerate}[(i)]
\item If $A$ is a normal matrix and $x$ is an eigenvector of $A$ with the eigenvalue $e$, then $\left\langle {{A}^{2}}x,x \right\rangle -{{\left\langle Ax,x \right\rangle }^{2}}={{e}^{2}}-{{e}^{2}}=0$.
\item Let $\sigma \left( A \right)$ and ${{\sigma }_{ap}}\left( A \right)$  be the spectrum and approximate spectrum of $A$, respectively. It is well-known that the spectrum of a normal operator has a simple structure. More precisely, if $A$ is normal, then we have $\sigma \left( A \right)={{\sigma }_{ap}}\left( A \right)$. If we assume that $e$ is in the approximate point spectrum of normal operator $A$, then there is a sequence ${{x}_{n}}\in \mathcal{H}$ with $\left\| {{x}_{n}} \right\|=1$ and $\left\langle A{{x}_{n}},{{x}_{n}} \right\rangle \to e$ as $n\to \infty $. Therefore $\underset{n\to \infty }{\mathop{\lim }}\,\left| \left\langle {{A}^{2}}{{x}_{n}},{{x}_{n}} \right\rangle -{{\left\langle A{{x}_{n}},{{x}_{n}} \right\rangle }^{2}} \right|=0$.
\end{enumerate}
\end{remark}
\section*{\bf Acknowledgments}
The authors would like to thank Professor Takeaki Yamazaki for his insightful comments  on Remark \ref{3.3}. Comments from the referee are also gratefully acknowledged.

\vskip 0.6 true cm

{\tiny $^1$Department of Mathematics, Mashhad Branch, Islamic Azad University, Mashhad, Iran.

{\it E-mail address:} hrmoradi@mshdiau.ac.ir

\vskip 0.4 true cm

{\tiny $^2$Department of Mathematics, Mashhad Branch, Islamic Azad University, Mashhad, Iran.

{\it E-mail address:} erfanian@mshdiau.ac.ir

\vskip 0.4 true cm

$^3$Department of Mathematics, Al-Balqa' Applied University, Salt 19117, Jordan.

{\it E-mail address:} khalid@bau.edu.jo}


\begin{thebibliography}{99}
\bibitem{1} A. Abu-Omar, F. Kittaneh, \textit{Upper and lower bounds for the
numerical radius with an application to involution operators}, Rocky
Mountain J. Math. \textbf{45}(4) (2015), 1055--1064.

\bibitem{BH} R. Bhatia, \textit{Matrix Analysis}, Springer-Verlag, Berlin, 1997.

\bibitem{4} S.S. Dragomir, \textit{Some inequalities for the norm and the
numerical radius of linear operators in Hilbert spaces}, Tamkang J. Math. 
\textbf{39}(1) (2008), 1--7.

\bibitem{13} M. Fujii, R. Nakamoto, \textit{Simultaneous extensions of
Selberg inequality and Heinz-Kato-Furuta inequality}, Nihonkai Math. J.
\textbf{9} (2) (1998), 219--225.

\bibitem{book2}
T. Furuta, {\it Invitation to Linear Operators}, Taylor \& Francis, Ltd., London, 2001.

\bibitem{5} K.E. Gustafson, D.K.M. Rao, \textit{Numerical Range. The Field
of Values of Linear Operators and Matrices}, Springer-Verlag, New York, 1997.

\bibitem{12} P.R. Halmos, \textit{A Hilbert Space Problem Book}, 2nd ed.,
Springer, New York, 1982.

\bibitem{HKS1} O. Hirzallah, F. Kittaneh, K. Shebrawi, \textit{Numerical
radius inequalities for commutators of Hilbert space operators}, Numer.
Funct. Anal. Optim. \textbf{32} (7) (2011) 739--749.

\bibitem{HKS2} O. Hirzallah, F. Kittaneh, K. Shebrawi, \textit{Numerical
radius inequalities for certain $2\times 2$} operator matrices, Integral
Equations Operator Theory, \textbf{71} {1} (2011) 129--147.

\bibitem{book}
L. Hogben (Ed.), {\it Handbook of Linear Algebra}, Chapman \& Hall/CRC Press, Boca Raton, 2007.

\bibitem{8} M. Kian, \textit{Operator Jensen inequality for superquadratic
functions}, Linear Algebra Appl. \textbf{456} (2014), 82--87.

\bibitem{3} F. Kittaneh, \textit{A numerical radius inequality and an
estimate for the numerical radius of the Frobenius companion matrix}, Studia
Math. \textbf{158} {1} (2003), 11--17.

\bibitem{NORM} F. Kittaneh, \textit{Norm inequalities for certain operator sums}, J.
Funct. Anal. \textbf{143} {2} (1997), 337--348.


\bibitem{2} F. Kittaneh, \textit{Numerical radius inequalities for Hilbert
space operators}, Studia Math. \textbf{168} {1} (2005), 73--80.

\bibitem{SA} K. Shebrawi, H. Albadawi, \textit{Numerical radius and
operator norm inequalities}, J. Inequal. Appl. (2009) Article ID 492154, 11
pages.


\bibitem{S} K. Shebrawi, \textit{Numerical radius inequalities for certain $%
2\times 2$ operator matrices $II$}, Linear Algebra Appl. \textbf{523} (2017), 1--12.



\bibitem{Y} T. Yamazaki, \textit{On upper and lower bounds of the numerical radius
and an equality condition}, Studia Math. \textbf{178} (2007), 83--89.
\bibitem{18}L. Zou, Y. Jiang, \textit{Improved arithmetic-geometric mean inequality and its application}, J. Math. Inequal. \textbf{9} (2015), 107--111.
\end{thebibliography}
\end{document}